\begin{document}

\newtheorem{theorem}{Theorem} 
\newtheorem{lemma}[theorem]{Lemma}
\newtheorem{cor}[theorem]{Corollary}
\newtheorem{proposition}{Proposition}

\theoremstyle{definition}
\newtheorem*{definition}{Definition}
\newtheorem*{remark}{Remark}
\newtheorem*{example}{Example}


\def\cA{\mathcal A}
\def\cB{\mathcal B}
\def\cC{\mathcal C}
\def\cD{\mathcal D}
\def\cE{\mathcal E}
\def\cF{\mathcal F}
\def\cG{\mathcal G}
\def\cH{\mathcal H}
\def\cI{\mathcal I}
\def\cJ{\mathcal J}
\def\cK{\mathcal K}
\def\cL{\mathcal L}
\def\cO{\mathcal O}
\def\cP{\mathcal P}
\def\cQ{\mathcal Q}
\def\cR{\mathcal R}
\def\cS{\mathcal S}
\def\cU{\mathcal U}
\def\cT{\mathcal T}
\def\cV{\mathcal V}
\def\cW{\mathcal W}
\def\cX{\mathcal X}
\def\cY{\mathcal Y}
\def\cZ{\mathcal Z}


\def\sA{\mathscr A}
\def\sB{\mathscr B}
\def\sC{\mathscr C}
\def\sD{\Delta}
\def\sE{\mathscr E}
\def\sF{\mathscr F}
\def\sG{\mathscr G}
\def\sH{\mathscr H}
\def\sI{\mathscr I}
\def\sJ{\mathscr J}
\def\sK{\mathscr K}
\def\sL{\mathscr L}
\def\sM{\mathscr M}
\def\sN{\mathscr N}
\def\sO{\mathscr O}
\def\sP{\mathscr P}
\def\sQ{\mathscr Q}
\def\sR{\mathscr R}
\def\sS{\mathscr S}
\def\sU{\mathscr U}
\def\sT{\mathscr T}
\def\sV{\mathscr V}
\def\sW{\mathscr W}
\def\sX{\mathscr X}
\def\sY{\mathscr Y}
\def\sZ{\mathscr Z}


\def\fA{\mathfrak A}
\def\fB{\mathfrak B}
\def\fC{\mathfrak C}
\def\fD{\mathfrak D}
\def\fE{\mathfrak E}
\def\fF{\mathfrak F}
\def\fG{\mathfrak G}
\def\fH{\mathfrak H}
\def\fI{\mathfrak I}
\def\fJ{\mathfrak J}
\def\fK{\mathfrak K}
\def\fL{\mathfrak L}
\def\fM{\mathfrak M}
\def\fN{\mathfrak N}
\def\fO{\mathfrak O}
\def\fP{\mathfrak P}
\def\fQ{\mathfrak Q}
\def\fR{\mathfrak R}
\def\fS{\mathfrak S}
\def\fU{\mathfrak U}
\def\fT{\mathfrak T}
\def\fV{\mathfrak V}
\def\fW{\mathfrak W}
\def\fX{\mathfrak X}
\def\fY{\mathfrak Y}
\def\fZ{\mathfrak Z}


\def\C{{\mathbb C}}
\def\F{{\mathbb F}}
\def\K{{\mathbb K}}
\def\L{{\mathbb L}}
\def\N{{\mathbb N}}
\def\Q{{\mathbb Q}}
\def\R{{\mathbb R}}
\def\Z{{\mathbb Z}}


\def\eps{\varepsilon}
\def\mand{\qquad\mbox{and}\qquad}
\def\\{\cr}
\def\({\left(}
\def\){\right)}
\def\[{\left[}
\def\]{\right]}
\def\<{\langle}
\def\>{\rangle}
\def\fl#1{\left\lfloor#1\right\rfloor}
\def\rf#1{\left\lceil#1\right\rceil}
\def\le{\leqslant}
\def\ge{\geqslant}
\def\ds{\displaystyle}

\def\xxx{\vskip5pt\hrule\vskip5pt}
\def\imhere{ \xxx\centerline{\sc I'm here}\xxx }

\def\imtoo{ \xxx\centerline{\sc I'm too!}\xxx }

\newcommand{\comm}[1]{\marginpar{
\vskip-\baselineskip \raggedright\footnotesize
\itshape\hrule\smallskip#1\par\smallskip\hrule}}

\def\cc#1{\textcolor{Red}{#1}}


\def\e{{\mathbf e}}
\def\en{\e_n}


\title{\bf Fractional parts of Dedekind sums}

\author{
{\sc William D.~Banks} \\
{Department of Mathematics} \\
{University of Missouri} \\
{Columbia, MO 65211 USA} \\
{\tt bankswd@missouri.edu} 
\and
{\sc Igor E.~Shparlinski} \\
{Department of Pure Mathematics} \\
{University of New South Wales} \\
{Sydney, NSW 2052, Australia} \\
{\tt igor.shparlinski@unsw.edu.au}}

\date{\today}
\pagenumbering{arabic}

\maketitle

\begin{abstract}
Using a recent improvement by Bettin and Chandee to 
a bound of Duke, Friedlander and Iwaniec~(1997)
on double exponential sums with Kloosterman fractions, we establish
a uniformity of distribution result for the fractional parts of 
Dedekind sums $s(m,n)$ with $m$ and $n$ running 
over rather general sets. Our result extends earlier work of
Myerson (1988) and Vardi (1987). Using different techniques, we 
also study the least denominator of the collection
of Dedekind sums $\bigl\{s(m,n):m\in(\Z/n\Z)^*\bigr\}$
on average for $n\in[1,N]$.
\end{abstract}

\bigskip

{\bf MSC numbers:} 11D45, 11D72, 11L40

\medskip

{\bf Keywords:} Dedekind sums, uniform distribution, exponential sums, Kloosterman fractions.

\maketitle

\newpage

\section{Introduction}
\label{sec:intro}

For any integers $n\ge m \ge 1$ the \emph{Dedekind sum} $s(m,n)$ is defined by
$$
s(m,n)=\sum_{k\bmod n}
\(\mkern-7mu\(\frac{km}{n}\)\mkern-7mu\)
\(\mkern-5mu\(\frac{m}{n}\)\mkern-5mu\),
$$
where $(\mkern-2mu(t)\mkern-2mu)$ denotes the distance from
the real number $t$ to the closest integer.
Vardi~\cite[Theorem~1.2]{Var} observes that the
bound
$$
\sum_{n\le x}\sum_{\substack{1\le m\le n\\\gcd(m,n)=1}}
\e(12k\,s(m,n))\ll x^{3/2+o(1)}\qquad (x \to \infty),
$$
where 
$$
\e(t)=\exp(2\pi it)\qquad(t\in\R),
$$
is an easy consequence of the Weil bound on Kloosterman sums.
This implies that for any fixed
integer $k\ne 0$ the collection of fractional parts
$\{12 k\,s(m,n)\}$ with $n\in\N$ and $m\in(\Z/n\Z)^*$
is uniformly distributed over the interval $[0,1)$;
see~\cite[Theorem~1.3]{Var}.  Vardi further shows (cf.~\cite[Theorem~1.6]{Var})
that the fractional parts $\{\rho\,s(m,n)\}$ are uniformly
distributed over $[0,1)$ for every fixed real number $\rho\ne 0$.

Myerson~\cite[Theorem~3]{Mye} extends the latter result by
showing that for any fixed $\rho\ne 0$ the set of pairs 
$\bigl(m/n, \{\rho\,s(m,n)\}\bigr)$ with $n\in\N$ and $m\in(\Z/n\Z)^*$
is uniformly distributed over the square $[0,1)\times[0,1)$.
This can be naturally interpreted as a statement
about the number of fractional parts $\{\rho\,s(m,n)\}$
with $1\le n\le N$, $1 \le m \le L_n$ and $\gcd(m,n)=1$ that fall into
any given connected interval in $[0,1)$, where the numbers $L_n$ are arbitrary integers
for which the sequence $(L_n/n)_{n\in\N}$
has a positive limit.

In the present paper we give another extension
of~\cite[Theorem~1.6]{Var}.  More precisely, suppose we are given
a real number $\rho\ne 0$, positive integers $M\le N$, and two sequences
of integers $\cK=(K_n)$ and $\cL = (L_n)$ for which
\begin{equation}
\label{eq:SeqKM}
M \le K_n < K_n + L_n\le 2M \qquad (n\sim N),
\end{equation}
where the notation $n\sim N$ is used here and elsewhere as an abbreviation
for $N<n\le 2N$. Furthermore, suppose that we are given two sets 
\begin{equation}
\label{eq:SetsMN}
\fM\subseteq(M,2M]\mand\fN\subseteq(N,2N].
\end{equation}

For a given choice of the data $\fD=(\rho,M,N,\cK,\cL,\fM,\fN)$ as above, we use
$\sA_\fD(\lambda)$ to denote the number
of pairs $(m,n) \in \fM \times \fN$ such that
\begin{equation}
\label{eq:Pair}
K_n < m \le K_n+L_n, \qquad \gcd(m,n)=1,
\end{equation}
and for which 
$$
\{\rho\,s(m,n)\}\in[0,\lambda].
$$

We also denote by $\sN_\fD = \sA_\fD(1)$ the number of 
pairs $(m,n)\in\fM \times\fN$ satisfying only~\eqref{eq:Pair}
(in particular, note that $\sN_\fD=\sA_\fD(1)$ regardless of the value of $\rho$). 
It is natural to expect that if the sets $\fM$ and $\fN$ 
are reasonably dense in the intervals $(M,2M]$ and  $(N,2N]$,
respectively, and do not have any local obstructions to the 
condition $\gcd(m,n)=1$ (such as containing only even numbers), then one 
might expect that 
\begin{equation}
\label{eq:N approx}
\sN_\fD=N^{o(1)} \sum_{n\in\fN}\#\([K_n,K_n+L_n]\cap\fM\).
\end{equation}
For instance, using a version of the prime number theorem for short intervals
(see~\cite[Section~10.5]{IwKow} and also~\cite{BaHaPi})
the bound~\eqref{eq:N approx} holds if $\fM$ is the
\emph{set of prime numbers} and most of the interval lengths $L_n$ are reasonably
long relative to $M$.  A similar result can also be obtained when $\fM$ is 
the \emph{set of $Q$-smooth numbers} (in different ranges of the smoothness level $Q$
depending on the sizes of  the interval lengths $L_n$);
necessary results can be found in the surveys~\cite{Granv,HilTen}. 

If $\sN_\fD$ is sufficiently large
(in particular if~\eqref{eq:N approx} holds) then
it is reasonable to expect that 
$\sA_\fD(\lambda)\approx\lambda\,\sN_\fD$ in many situations.
To make a quantitative statement, 
we denote by $\sD_\fD$ the largest 
deviation of $\sA_\fD(\lambda)$ from its expected value
as $\lambda$ varies over the interval $[0,1]$; that is, we set
$$
\sD_\fD=
\sup_{\lambda\in[0,1]}
\bigl|\sA_\fD(\lambda)-\lambda\,\sN_\fD\bigr|.
$$
Here, we demonstrate how recent results of
Bettin and  Chandee~\cite{BettChan}, which improve earlier estimates
of Duke, Friedlander and Iwaniec~\cite{DuFrIw}, can be used
to estimate $\sD_\fD$ for a wide variety of the
data $\fD=(\rho,M,N,\cK,\cL,\fM,\fN)$.
To illustrate the ideas, we focus on the simplest case in which $\rho=12$;
however, our approach works in much greater generality.  The following
result is proved below in \S\ref{sec:thm1}.

\begin{theorem}
\label{thm:Discr}
Let $\fD=(12,M,N,\cK,\cL,\fM,\fN)$ be a given tuple of data 
with positive integers $M\le N$, two sequences
of integers $\cK=(K_n)$ and $\cL=(L_n)$ satisfying~\eqref{eq:SeqKM},
and two sets $\fM$ and $\fN$  satisfying~\eqref{eq:SetsMN}.
Then the bound
$$
\sD_\fD\ll  |\fM\times\fN|^{1/2}  M^{3/10}N^{13/20+o(1)}   
+\sN_\fD M^{1/2}N^{-1/2} $$
holds as $M\to\infty$.
\end{theorem}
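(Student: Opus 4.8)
The plan is to control $\sD_\fD$ by the Erd\H{o}s--Tur\'an inequality, which converts the discrepancy into exponential sums. For a parameter $H\ge 1$ to be chosen later, set
$$
S_h=\sum_{n\in\fN}\ \sum_{\substack{K_n<m\le K_n+L_n\\ m\in\fM,\ \gcd(m,n)=1}}\e\(12h\,s(m,n)\)\qquad(1\le h\le H).
$$
Then Erd\H{o}s--Tur\'an gives
$$
\sD_\fD\ll\frac{\sN_\fD}{H}+\sum_{1\le h\le H}\frac1h\,|S_h|,
$$
so the whole problem reduces to a bound for $|S_h|$ that is effectively uniform over the admissible range of $h$.

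The engine for estimating $S_h$ is the conversion of the Dedekind sum into a Kloosterman fraction. Writing $\bar m$ for the multiplicative inverse of $m$ modulo $n$, a classical identity coming from the transformation behaviour of the Dedekind $\eta$-function (equivalently, from Dedekind's reciprocity law) yields the congruence
$$
12\,s(m,n)\equiv\frac{m+\bar m}{n}\pmod 1\qquad(\gcd(m,n)=1),
$$
and hence
$$
\e\(12h\,s(m,n)\)=\e\(\frac{hm}{n}\)\e\(\frac{h\bar m}{n}\).
$$
The factor $\e(h\bar m/n)$ is exactly the Kloosterman fraction controlled by the Bettin--Chandee bound, so $S_h$ is essentially a bilinear form in such fractions, twisted by the additive character $\e(hm/n)$.

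To invoke Bettin--Chandee I must first restore the clean bilinear shape $\sum_m\sum_n\alpha_m\beta_n\,\e(h\bar m/n)$ with $\alpha$ supported on $\fM$ and $\beta$ on $\fN$; the coprimality restriction is harmless, since $\bar m$ (and hence the summand) is only defined when $\gcd(m,n)=1$. The real obstacle is that two factors couple the variables $m$ and $n$: the phase $\e(hm/n)$ and the sharp $n$-dependent cutoff $\mathbf{1}_{K_n<m\le K_n+L_n}$. I would separate these by completion: expanding the interval indicator into additive characters modulo a length $\asymp M$ and merging the result with $\e(hm/n)$ writes $S_h$, up to a negligible tail, as a short superposition of genuine bilinear Kloosterman-fraction forms whose coefficient vectors are (essentially) the indicators of $\fM$ and $\fN$, so that $\|\alpha\|_2\|\beta\|_2\ll|\fM\times\fN|^{1/2}$. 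Choosing $H\asymp(N/M)^{1/2}$ keeps $hL_n/n\ll1$, so the character $\e(hm/n)$ is slowly varying in $m$ and this separation costs only a factor $N^{o(1)}$. Feeding each resulting form into the Bettin--Chandee estimate and balancing its exponents against the sizes of $M$, $N$ and $h\le H$ produces, after summing against $h^{-1}$ (a further $N^{o(1)}$ loss), the first term $|\fM\times\fN|^{1/2}M^{3/10}N^{13/20+o(1)}$.

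It remains to insert this into the Erd\H{o}s--Tur\'an bound. With the same choice $H\asymp(N/M)^{1/2}$ the leading term becomes $\sN_\fD/H\asymp\sN_\fD M^{1/2}N^{-1/2}$, which is the second term in the statement, while the $h$-sum contributes the first. I expect the crux to be the variable separation in the previous paragraph: one must untangle the sharp $n$-dependent interval from the twisting character $\e(hm/n)$ without inflating the $\ell^2$-norms beyond $|\fM\times\fN|^{1/2}$, because any lossiness there feeds directly into both exponents of the main term and would weaken the theorem.
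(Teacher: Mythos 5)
Your outline follows the paper's proof almost step for step: Erd\H{o}s--Tur\'an, Hickerson's identity $12\,s(m,n)\equiv(m+m_n^*)/n\pmod 1$, completion of the $n$-dependent interval condition by additive characters to a modulus $\asymp M$, Cauchy's inequality in $m$, the Bettin--Chandee bound, and the choice $H\asymp(N/M)^{1/2}$. Two places, however, where the sketch does not quite deliver the bound as stated. First, the factor $\e(hm/n)$: the paper does not separate it analytically but simply observes that $hM\le N$ for every $h\le H$, so $\en(hm)=1+O(hMN^{-1})$; discarding it costs an additive error $O(hJMN^{-1})$ per harmonic, hence $O(HJMN^{-1})\asymp\sN_\fD M^{1/2}N^{-1/2}$ after summing against $h^{-1}$, which is absorbed into the second term of the theorem. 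Your plan to ``merge'' $\e(hm/n)$ with the completion characters is the one genuinely vague step: taken literally it makes the $m$-coefficients depend on $n$ and destroys the bilinear shape needed for Bettin--Chandee. A correct version (a two-variable separation of the flat function $\e(hm/n)$ on the box, using $hM/N\le 1$) is possible, but it is more work than the paper's one-line approximation and buys nothing, since the $J/H$ term already contributes $\sN_\fD M^{1/2}N^{-1/2}$.

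Second, and more substantively: after Cauchy and Bettin--Chandee the main term is $|\fM\times\fN|^{1/2}\,(N^{7/8}+M^{3/10}N^{13/20})\,N^{o(1)}$, not merely its second summand, and $N^{7/8}$ is \emph{not} always dominated by $M^{3/10}N^{13/20}$. The paper disposes of $N^{7/8}$ by a separate observation: if $M\le N^{3/4}$ then $|\fM\times\fN|^{1/2}\le(MN)^{1/2}\le M^{3/10}N^{13/20}$, so the stated bound already exceeds the trivial estimate $\sD_\fD\le\sN_\fD\le|\fM\times\fN|$ and holds for free; while if $M\ge N^{3/4}$ then $N^{7/8}\le M^{3/10}N^{13/20}$ and the unwanted term is absorbed. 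Without this case distinction the argument only yields the weaker bound containing $|\fM\times\fN|^{1/2}N^{7/8+o(1)}$, so this step needs to be supplied to reach the theorem as stated.
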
 

Using the bound $\sN_\fD \le |\fM\times\fN|$ we simplify Theorem~\ref{thm:Discr}
as follows.

\begin{cor}
\label{cor:Discr1}
Under the conditions of Theorem~\ref{thm:Discr} the bound 
$$
\sD_\fD\ll |\fM\times\fN|^{1/2}  M^{3/10}N^{13/20+o(1)} 
+ |\fM\times\fN| M^{1/2}N^{-1/2} 
$$
holds as $M\to\infty$.
\end{cor}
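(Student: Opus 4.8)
The plan is to derive this corollary directly from Theorem~\ref{thm:Discr} by replacing the quantity $\sN_\fD$ appearing in the second term with an elementary combinatorial bound. The first term $|\fM\times\fN|^{1/2} M^{3/10}N^{13/20+o(1)}$ is identical in both statements, so no work is required there; only the second summand $\sN_\fD M^{1/2}N^{-1/2}$ needs attention.

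First I would recall that, by definition, $\sN_\fD=\sA_\fD(1)$ counts precisely those pairs $(m,n)\in\fM\times\fN$ that satisfy the interval and coprimality conditions in~\eqref{eq:Pair}. Since this is the cardinality of a subset of the Cartesian product $\fM\times\fN$, we obtain the trivial inequality $\sN_\fD\le|\fM\times\fN|$.

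Substituting this into the estimate of Theorem~\ref{thm:Discr} gives $\sN_\fD M^{1/2}N^{-1/2}\le|\fM\times\fN|M^{1/2}N^{-1/2}$, and combining with the unchanged first term yields the claimed bound. There is no genuine obstacle here: the corollary is a purely formal weakening of the theorem, recorded for convenience in applications where one prefers an estimate expressed in terms of the total number of candidate pairs $|\fM\times\fN|$ rather than the \emph{a priori} unknown count $\sN_\fD$.
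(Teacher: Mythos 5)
Your proposal is correct and matches the paper's own derivation exactly: the paper states that the corollary follows from Theorem~\ref{thm:Discr} ``using the bound $\sN_\fD \le |\fM\times\fN|$,'' which is precisely the trivial inequality you invoke. Nothing further is needed.
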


Moreover, using $|\fM\times\fN| \le MN$ we can simplify Theorem~\ref{thm:Discr}
further. 

\begin{cor}
\label{cor:Discr2}
Under the conditions of Theorem~\ref{thm:Discr} the bound 
$$
\sD_\fD\ll M^{4/5}N^{23/20}
+   M^{3/2}N^{1/2} 
$$
holds as $M\to\infty$.
\end{cor}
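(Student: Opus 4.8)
The plan is to obtain the stated bound as an immediate specialization of Corollary~\ref{cor:Discr1} (equivalently, of Theorem~\ref{thm:Discr}) by replacing the cardinality $|\fM\times\fN|$ with the crude upper bound $MN$. First I would record this cardinality estimate: since $\fM\subseteq(M,2M]$ and $\fN\subseteq(N,2N]$ by~\eqref{eq:SetsMN}, these sets contain at most $M$ and $N$ integers, respectively, so that
$$
|\fM\times\fN|=|\fM|\,|\fN|\le MN.
$$
One could equally well start from Theorem~\ref{thm:Discr} directly, invoking the chain $\sN_\fD\le|\fM\times\fN|\le MN$ to bound the second term as well.

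Next I would substitute $|\fM\times\fN|\le MN$ into each of the two terms appearing in Corollary~\ref{cor:Discr1} and collect exponents. For the first term,
$$
|\fM\times\fN|^{1/2}M^{3/10}N^{13/20+o(1)}\le (MN)^{1/2}M^{3/10}N^{13/20+o(1)}=M^{4/5}N^{23/20+o(1)},
$$
where I have used $\tfrac12+\tfrac{3}{10}=\tfrac45$ and $\tfrac12+\tfrac{13}{20}=\tfrac{23}{20}$. For the second term,
$$
|\fM\times\fN|\,M^{1/2}N^{-1/2}\le (MN)M^{1/2}N^{-1/2}=M^{3/2}N^{1/2}.
$$
Summing the two contributions yields $\sD_\fD\ll M^{4/5}N^{23/20+o(1)}+M^{3/2}N^{1/2}$, which is precisely the asserted estimate once the harmless sub-polynomial factor $N^{o(1)}$ accompanying the first term is suppressed.

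There is no genuine obstacle here: the whole argument rests on the monotonicity of the right-hand side of Corollary~\ref{cor:Discr1} in the quantity $|\fM\times\fN|$ together with the trivial counting bound $|\fM\times\fN|\le MN$. The only point meriting a moment's attention is the book-keeping of the $N^{o(1)}$ factor, which is inherited from the exponential sum input in Theorem~\ref{thm:Discr}, attaches solely to the first term, and—being sub-polynomial in $N$—is of no consequence in the simplified statement.
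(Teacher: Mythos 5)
Your proposal is correct and matches the paper's own derivation: the paper obtains Corollary~\ref{cor:Discr2} precisely by inserting the trivial bound $|\fM\times\fN|\le MN$ into Corollary~\ref{cor:Discr1}, and your exponent arithmetic ($\tfrac12+\tfrac{3}{10}=\tfrac45$, $\tfrac12+\tfrac{13}{20}=\tfrac{23}{20}$) is right. Your remark about the suppressed $N^{o(1)}$ factor in the first term is also a fair observation about the statement as printed.
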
 

In the case that $\sN_\fD = (MN)^{1+o(1)}$ one sees that Corollary~\ref{cor:Discr2}
improves the trivial bound $\sD_\fD \le \sN_\fD$ provided that the inequalities
$N^{3/4+\eps} \le M \le N^{1-\eps}$ 
hold with some fixed $\eps>0$ as $M\to\infty$.

In this paper we also study the distribution of the least denominator $q(n)$
of the Dedekind sums to modulus $n$.  More precisely, expressing each
Dedekind sum $s(m,n)$ as a reduced fraction $a(m,n)/q(m,n)$, let
$$
q(n)=\min\bigl\{q(m,n)~:~m\in(\Z/n\Z)^*\bigr\}.
$$
In \S\ref{sec:thm2} we prove the following result.

\begin{theorem}
\label{thm:Denom}  
We have
$$
\sum_{n=1}^N q(n)= \left(C+o(1)\right)
\frac{N^2}{(\log N)^{1/2}}\qquad(N\to\infty),
$$
where 
$$
C=\frac{3\sqrt{2}}{8\pi}
\prod_{p \equiv 1 \bmod 4}(1-p^{-2})^{-1}
\prod_{p \equiv 3 \bmod 4}(1-p^{-2})^{-1/2}.
$$
\end{theorem}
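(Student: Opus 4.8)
The plan is to reduce the least denominator $q(n)$ to an explicit, essentially multiplicative, arithmetic function of $n$, and then to evaluate its average by the Selberg--Delange method. The starting point is the classical Dedekind reciprocity law, which after clearing denominators (and using the classical fact that the denominator of $s(m,n)$ divides $2n\gcd(3,n)$, so the left-hand side below is an integer) yields, for $\gcd(m,n)=1$, the congruence
$$
12mn\,s(m,n)\equiv m^2+1 \pmod n .
$$
Writing $s(m,n)=a(m,n)/q(m,n)$ in lowest terms, this congruence pins down, prime by prime, the $p$-adic valuation $v_p(q(m,n))$ in terms of $v_p(n)$ and $v_p(m^2+1)$, so the analysis of $q(n)$ becomes a purely local optimization problem.

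For each prime $p\mid n$ with $p>3$ I would argue as follows. If $p\equiv 3\bmod 4$ then $-1$ is a quadratic non-residue, so $p\nmid m^2+1$ for every admissible $m$, which forces $v_p(q(m,n))=v_p(n)$; thus such primes never cancel. If $p\equiv 1\bmod 4$ one can solve $m^2\equiv -1\pmod{p^{v_p(n)}}$, and this choice forces $v_p(q(m,n))=0$, i.e. the full $p$-part of $n$ cancels. By the Chinese Remainder Theorem all these local optima are realized simultaneously by a single residue class for $m$, and since each $v_p(q(m,n))$ depends only on local data, the minimum $q(n)$ is multiplicative with $q(p^a)=1$ for $p\equiv 1\bmod 4$ and $q(p^a)=p^a$ for $p\equiv 3\bmod 4$, $p>3$. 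Heuristically $q(n)$ is therefore, up to bounded factors at $2$ and $3$, the ``$3\bmod 4$ part'' of $n$, which is exactly the source of the sums-of-two-squares behaviour and of the factor $(\log N)^{-1/2}$.

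I expect the main obstacle to be the exact determination of the factors $q(2^a)$ and $q(3^a)$. When $n$ is odd the factor of $2$ in the denominator comes from the $2n$ in the denominator bound rather than from $n$ itself, so the naive valuation computation fails: for instance $m^2+1$ and a suitable multiple of $n$ can both be odd yet sum to a multiple of $4$, making $v_2$ jump. The $\gcd(3,n)$ factor similarly forces an extra power of $3$. Pinning down these local factors requires careful casework on $v_2(m^2+1+nt)$ and on residues modulo powers of $3$, and it is precisely these local computations (together with the transcendental pieces below) that produce the rational part of the constant $\tfrac{3\sqrt2}{8\pi}$.

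With $q(n)$ identified, I would pass to the Dirichlet series
$$
F(s)=\sum_{n\ge 1}\frac{q(n)}{n^{s}}=A(s)\prod_{p\equiv 3\bmod 4}\bigl(1-p^{1-s}\bigr)^{-1},
$$
where $A(s)$ carries the factors $\prod_{p\equiv 1\bmod 4}(1-p^{-s})^{-1}$ together with the explicit local factors at $2$ and $3$, and is holomorphic and non-vanishing near $s=2$. The only singularity at $s=2$ comes from the displayed product over $p\equiv 3\bmod 4$, which I would relate to $\bigl(\zeta(s-1)/L(s-1,\chi_4)\bigr)^{1/2}$ (with $\chi_4$ the non-trivial character modulo $4$) times a product convergent and non-vanishing near $s=2$; since $\zeta(s-1)$ has a simple pole at $s=2$ while $L(s-1,\chi_4)\to L(1,\chi_4)=\pi/4$, this exhibits an $(s-2)^{-1/2}$ singularity. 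A Tauberian theorem of Selberg--Delange type then gives $\sum_{n\le N}q(n)\sim C\,N^{2}(\log N)^{-1/2}$, the constant $C$ being assembled from $\Gamma(1/2)=\sqrt\pi$, the value $L(1,\chi_4)=\pi/4$, the factor $(1-\tfrac12)^{1/2}$, the local factors at $2$ and $3$, and the surviving Euler products $\prod_{p\equiv 1\bmod 4}(1-p^{-2})^{-1}$ and $\prod_{p\equiv 3\bmod 4}(1-p^{-2})^{-1/2}$; these reassemble into the stated value. The remaining technical point is to supply the analytic continuation and error-term estimates needed to justify this Tauberian step.
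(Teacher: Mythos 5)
Your overall strategy is sound and would yield the theorem, but it diverges from the paper in both of its two halves, and one of the two divergences leaves a genuine (though acknowledged) gap. First, the identification of $q(n)$: the paper does not derive it at all, but simply quotes Girstmair's explicit formula (Lemma~\ref{lem:qn}), namely $q(n)=q_0(n)$ for odd $n$ and $q(n)=2^{v_2(n)-1}q_0(n)$ for even $n$, with $q_0(n)=\prod_{p\mid n,\,p\equiv 3\bmod 4}p^{v_p(n)}$. Your local analysis for $p\equiv 1,3\bmod 4$ with $p>3$ reproduces this correctly (and is most cleanly run through Hickerson's relation, Lemma~\ref{lem:FracPart}, which gives the denominator of $12\,s(m,n)$ as $n/\gcd(n,m+m_n^*)$, reducing everything to solving $m^2\equiv -1$ locally). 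But the factor you defer --- ``careful casework'' at $2$ and $3$ --- is exactly what determines the rational part $\tfrac{3\sqrt2}{8\pi}$ of the constant: the local factor at $2$ contributes the $\tfrac34$ in the paper's computation of $Q_2(N)$, so an error there changes $C$. Note also that your worry about $p=3$ is unfounded: by Girstmair's formula $3$ behaves like every other prime $\equiv 3\bmod 4$ (the extra $\gcd(3,n)$ in the crude denominator bound never survives reduction), whereas $p=2$ genuinely is special. As written, your proof of the key input is incomplete; it becomes complete the moment you cite Girstmair as the paper does.

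Second, the analytic step: you propose Selberg--Delange applied to $F(s)=\sum q(n)n^{-s}$, exploiting the $(s-2)^{-1/2}$ singularity coming from $\zeta(s-1)^{1/2}L(s-1,\chi_4)^{-1/2}$. The paper instead applies Wirsing's theorem (Lemma~\ref{lem:Wirs}) to the bounded multiplicative function $f(n)=q(n)/n$ with $\nu=1/2$, evaluates the resulting Euler product using Uchiyama's asymptotic for $\prod_{p\le N,\,p\equiv 3\bmod 4}(1-p^{-1})^{-1}\sim \varpi_{4,3}(\log N)^{1/2}$, and finishes by partial summation. Both routes are standard and give the same constant ($\Gamma(1/2)=\sqrt\pi$ and $L(1,\chi_4)=\pi/4$ enter either way, via Uchiyama's constant in the paper's version); yours requires verifying analytic continuation of $F$ to a zero-free region of $L(s,\chi_4)$, while the paper's is softer and needs only the elementary hypotheses of Wirsing plus a known product asymptotic. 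Either is acceptable, but you should be aware that the Tauberian machinery you invoke is heavier than what the statement requires.
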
 

For other  recent results about the
distribution and other properties of Dedekind sums,
we refer the interested reader 
to~\cite{AXZ1,AXZ2,Girst1,Girst2,Girst3,Girst4,Girst5,JRW,Tsu}. 

\section{Preliminaries}

Throughout the paper, the implied constants in the symbols ``$O$'' and
``$\ll$'' are absolute.  We recall that
the expressions $A \ll B$ and $A=O(B)$ are each equivalent to the
statement that $|A|\le cB$ for some constant $c$.

We use $\|\cdot\|_1$ and $\|\cdot\|_2$ to denote the $L^1$ and $L^2$ norms,
respectively, for finite sequences of complex numbers.

Given coprime integers $m,n\ge 1$ we use
$m_n^*$ to denote the unique integer defined by the conditions
$$
m m_n^*\equiv 1 \bmod n \mand 1 \le m_n^*\le n.
$$

As mentioned in \S\ref{sec:intro}, our investigation of the distribution of the fractional 
parts $\{\rho\,s(m,n)\}$ focuses on the special case $\rho=12$;
accordingly, following Girstmair~\cite{Girst2} we denote
$$
S(m,n) = 12\,s(m,n).
$$
The next well known result is due to Hickerson~\cite{Hic}.

\begin{lemma}
\label{lem:FracPart}
For any coprime integers $m,n\ge 1$ we have
$$
S(m,n)-\frac{m+m_n^*}{n}\in\Z. 
$$
\end{lemma}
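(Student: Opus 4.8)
The plan is to reproduce Hickerson's argument, which couples the classical reciprocity law for Dedekind sums with an induction powered by the Euclidean algorithm. First I would normalize the problem to the range $1\le m\le n$. Both sides of the asserted relation are invariant under $m\mapsto m+n$: the Dedekind sum $s(m,n)$ depends only on $m$ modulo $n$, the inverse $m_n^*$ is unchanged, and replacing $m$ by $m+n$ merely shifts $(m+m_n^*)/n$ by the integer $1$. Hence it suffices to treat coprime pairs with $1\le m\le n$, and I would argue by strong induction on the modulus $n$, the key point being that the reciprocity law reduces the pair $(m,n)$ to a pair with strictly smaller second argument.

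The base case of the induction is $m=1$ (which I treat directly for every $n$, since it does not reduce sensibly under the Euclidean step). Here the classical evaluation $s(1,n)=(n-1)(n-2)/(12n)$ gives $S(1,n)=n-3+2/n$, while $m_n^*=1$ forces $(m+m_n^*)/n=2/n$; the difference is the integer $n-3$, as required.

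For the inductive step I take $m\ge 2$ and write $r=n\bmod m$, so $0\le r<m$ and in fact $r\ge 1$ by coprimality. The reciprocity law reads
$$
S(m,n)+S(n,m) = -3 + \frac{m}{n}+\frac{n}{m}+\frac{1}{mn}.
$$
Since $s(n,m)=s(r,m)$ and the pair $(r,m)$ has second argument $m<n$, the induction hypothesis applies to it; combined with $r\equiv n\bmod m$ (hence $r/m\equiv n/m\pmod 1$ and $r_m^*=n_m^*$, where $n_m^*$ denotes the inverse of $n$ modulo $m$ normalized to $1\le n_m^*\le m$), this yields $S(n,m)\equiv (n+n_m^*)/m\pmod 1$. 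Substituting into the reciprocity relation, discarding the integer $-3$, and cancelling the $n/m$ terms reduces the whole claim to the single congruence
$$
\frac{m_n^*}{n}+\frac{n_m^*}{m}\equiv \frac{1}{mn}\pmod 1,
$$
which I would establish by the Chinese Remainder Theorem: the integer $m\,m_n^*+n\,n_m^*$ is congruent to $1$ modulo $m$ and modulo $n$, hence congruent to $1$ modulo $mn$; dividing by $mn$ gives exactly the displayed identity. A final rearrangement, in which the two copies of $1/(mn)$ cancel, collapses the right-hand side to $(m+m_n^*)/n$ and closes the induction.

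The main obstacle is not analytic but bookkeeping: one must keep the modular inverses $m_n^*$ and $n_m^*=r_m^*$ mutually consistent across the reciprocity step, with their representatives correctly normalized, and verify the Chinese Remainder identity above with the right sign conventions. The reciprocity law itself is classical and can simply be quoted; once it and the inverse identity are in hand, every remaining manipulation takes place modulo $1$ and is routine.
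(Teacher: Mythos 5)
Your argument is correct, but note that the paper does not prove this lemma at all: it is quoted as a known result with a citation to Hickerson, so there is no in-paper proof to compare against. Your self-contained derivation is the classical one and it checks out: the reduction to $1\le m\le n$ modulo the periodicity of both sides, the base case $S(1,n)=n-3+2/n$ from $s(1,n)=(n-1)(n-2)/(12n)$, the Euclidean descent through the reciprocity law $S(m,n)+S(n,m)=-3+m/n+n/m+1/(mn)$, and the key congruence
$$
\frac{m_n^*}{n}+\frac{n_m^*}{m}=\frac{m\,m_n^*+n\,n_m^*}{mn}\equiv\frac{1}{mn}\pmod 1,
$$
which indeed follows from the Chinese Remainder Theorem since $m\,m_n^*+n\,n_m^*$ is $\equiv 1$ both mod $m$ and mod $n$. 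All the normalization worries you flag are harmless, since shifting $m_n^*$ or $n_m^*$ by a multiple of the relevant modulus only changes the fractions by integers. For comparison, Hickerson's cited paper obtains the same congruence via the continued fraction expansion of $m/n$, which is the same Euclidean descent packaged differently; your route via reciprocity makes the induction more transparent at the cost of having to quote the reciprocity law. One cosmetic caveat: the paper's displayed definition of $s(m,n)$ (and its description of $(\mkern-2mu(t)\mkern-2mu)$ as a distance to the nearest integer) contains typos relative to the standard sawtooth definition under which both the reciprocity law and the evaluation of $s(1,n)$ hold; your proof implicitly, and correctly, uses the standard definition.
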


We also need the following 
estimate which is the special case 
$A=1$ of the more general bound of Bettin and 
Chandee~\cite{BettChan} on their sum
$\cC_1(M,N,A;\beta,\nu)$, which is obtained in the proof 
of their main result;
see also~\cite[Theorem~6]{DuFrIw}.

\begin{lemma}
\label{lem:DFIa Gen}
For any positive integer $b$ and any complex numbers $\beta_n$, the sum
$$
\sC(M,N;\beta,b)=\sum_{m\sim M}
\biggl|\sum_{\substack{n\sim N\\\gcd(n,m)=1}}\beta_n
\,\e\(b\,\frac{m^*_n}{n}\)\biggr|^2
$$
is bounded by
\begin{align*}
\sC(M,N;\beta,b)\le\|\beta\|_2^2&\(\frac{b}{MN} + 1\)^{1/2}\\
&(MN^{3/4}+N^{7/4} +
M^{6/5}N^{7/10} + M^{3/5}N^{13/10}) (MN)^{ o(1)}
\end{align*}
as $MN\to \infty$. 
\end{lemma}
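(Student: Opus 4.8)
The plan is to obtain the stated inequality as a direct specialization of the Bettin--Chandee estimate rather than by a fresh argument. In the course of proving their main theorem, Bettin and Chandee~\cite{BettChan} bound the weighted second moment $\cC_1(M,N,A;\beta,\nu)$ of the linear form $\sum_n\beta_n\,\e(\nu\,m_n^*/n)$ averaged over $m\sim M$, keeping explicit track of the dependence on the auxiliary parameter $A$, on the height $\nu$ of the Kloosterman fractions, and on the lengths $M$ and $N$. The first step is therefore to extract their bound in this general form from their proof, which is where it appears (it is not isolated as a standalone theorem), together with the ranges of $M$ and $N$ in which each competing term dominates.

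The second step is the identification of parameters. Setting $A=1$ and taking the frequency $\nu=b$ collapses $\cC_1(M,N,1;\beta,b)$ to exactly $\sC(M,N;\beta,b)$, provided one checks that the coprimality restriction $\gcd(n,m)=1$ and the normalization of the coefficients $\beta_n$ match the conventions of~\cite{BettChan}. The height dependence in their estimate is precisely the factor $\(\nu/(MN)+1\)^{1/2}=\(b/(MN)+1\)^{1/2}$, while the four quantities $MN^{3/4}$, $N^{7/4}$, $M^{6/5}N^{7/10}$ and $M^{3/5}N^{13/10}$ are what their argument produces in the relevant ranges; reading these off, with $\|\beta\|_2^2$ carrying the quadratic dependence on the coefficients, yields the claim.

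If one preferred to reprove the estimate instead of citing it, the underlying mechanism is standard. One opens the square to write
$$\sC(M,N;\beta,b)=\sum_{n_1,n_2\sim N}\beta_{n_1}\overline{\beta_{n_2}}\sum_{\substack{m\sim M\\\gcd(m,n_1n_2)=1}}\e\(b\,\frac{m_{n_1}^*}{n_1}-b\,\frac{m_{n_2}^*}{n_2}\),$$
then uses the Chinese Remainder Theorem (after separating out $d=\gcd(n_1,n_2)$) to rewrite the difference $m_{n_1}^*/n_1-m_{n_2}^*/n_2$ modulo $1$ as a single Kloosterman fraction $\bar m\,(n_2-n_1)/(n_1n_2)$ to a modulus dividing $n_1n_2$. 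Since $m$ runs over the short range $m\sim M$ with $M\le N\ll(n_1n_2)^{1/2}$, one completes the inner sum by Poisson summation---this is the source of the height factor $\(b/(MN)+1\)^{1/2}$---and reduces to complete Kloosterman sums. Duke, Friedlander and Iwaniec~\cite{DuFrIw} bound these pointwise by the Weil bound, whereas Bettin and Chandee gain by invoking the spectral large sieve, i.e.\ Kuznetsov's formula, to exploit cancellation on average over the moduli.

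The main obstacle is neither the opening of the square nor the Weil or spectral input, but the bookkeeping required to confirm that $A=1$ is a \emph{legitimate} specialization: one must verify that no step in the derivation of the general bound tacitly assumes $A$ to be large or coprime to the moduli, and that the height factor $\(b/(MN)+1\)^{1/2}$ survives intact for arbitrary $b$. Once this is checked the lemma follows with no further work.
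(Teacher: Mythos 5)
Your proposal matches the paper exactly: the paper offers no independent proof of this lemma, but simply cites it as the special case $A=1$ of the Bettin--Chandee bound on their sum $\cC_1(M,N,A;\beta,\nu)$, extracted from the proof of their main result, which is precisely your first two steps. Your sketch of a from-scratch reproof is extra material the paper does not attempt, but the operative argument---specialization of the cited estimate---is the same.
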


In the case $M \le N$, which is relevant to our 
 situation, Lemma~\ref{lem:DFIa Gen} simplifies as follows.

\begin{cor}
\label{cor:DFIa}
For any positive integer $b$ and any complex numbers $\beta_n$, for $M \le N^{1+o(1)}$
the sum, in the notation of Lemma~\ref{lem:DFIa Gen} we have 
$$
\sC(M,N;\beta,b)\le\|\beta\|_2^2\(\frac{b}{MN} + 1\)^{1/2} (N^{7/4} + M^{3/5}N^{13/10}) 
N^{ o(1)}
$$
as $N\to \infty$. 
\end{cor}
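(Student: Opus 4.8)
The plan is to derive Corollary~\ref{cor:DFIa} directly from Lemma~\ref{lem:DFIa Gen} by discarding the terms in the four-term bound that are dominated by others under the hypothesis $M\le N^{1+o(1)}$. Since the prefactor $\|\beta\|_2^2(b/(MN)+1)^{1/2}$ and the $(MN)^{o(1)}$ factor appear identically in both statements (with $(MN)^{o(1)}\le N^{o(1)}$ when $M\le N^{1+o(1)}$), the entire task reduces to showing that
\begin{equation*}
MN^{3/4}+N^{7/4}+M^{6/5}N^{7/10}+M^{3/5}N^{13/10}\ll N^{7/4}+M^{3/5}N^{13/10}
\end{equation*}
holds as $N\to\infty$ whenever $M\le N^{1+o(1)}$.

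First I would dispose of the term $MN^{3/4}$. Using $M\le N^{1+o(1)}$ we have $MN^{3/4}\le N^{7/4+o(1)}$, so this term is absorbed into $N^{7/4}$ up to the $N^{o(1)}$ factor that is already present on the right-hand side. Next I would handle the term $M^{6/5}N^{7/10}$, which is the only genuinely nontrivial comparison. I would split into two regimes according to the size of $M$ relative to $N$. Comparing $M^{6/5}N^{7/10}$ against $N^{7/4}$ shows the former is smaller exactly when $M^{6/5}\le N^{21/20}$, i.e.\ $M\le N^{7/8}$; comparing $M^{6/5}N^{7/10}$ against $M^{3/5}N^{13/10}$ shows the former is smaller exactly when $M^{3/5}\le N^{3/5}$, i.e.\ $M\le N$. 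Since we only assume $M\le N^{1+o(1)}$, the second comparison gives $M^{6/5}N^{7/10}\le M^{3/5}N^{13/10}N^{o(1)}$, so this term is absorbed into $M^{3/5}N^{13/10}$ up to the admissible $N^{o(1)}$ factor.

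The remaining two terms, $N^{7/4}$ and $M^{3/5}N^{13/10}$, are exactly those retained on the right-hand side, so no further work is needed for them. Collecting the estimates, every term on the left is bounded by $(N^{7/4}+M^{3/5}N^{13/10})N^{o(1)}$, and substituting this back into the bound of Lemma~\ref{lem:DFIa Gen} yields the claimed inequality, with the $(MN)^{o(1)}$ factor folded into $N^{o(1)}$.

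I do not expect any serious obstacle here: this is a routine simplification in which a four-term majorant is reduced to the two dominant terms under a one-sided constraint on $M$. The only point requiring a moment of care is that the absorption of $M^{6/5}N^{7/10}$ into $M^{3/5}N^{13/10}$ uses the full strength of $M\le N^{1+o(1)}$ (the crossover is precisely at $M=N$), and the small slack $N^{o(1)}$ is exactly what accommodates the $o(1)$ in the hypothesis. Everything else follows by direct comparison of exponents.
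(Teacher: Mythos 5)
Your proposal is correct and is exactly the intended argument: the paper gives no separate proof of Corollary~\ref{cor:DFIa}, simply asserting that under $M\le N^{1+o(1)}$ the terms $MN^{3/4}$ and $M^{6/5}N^{7/10}$ are absorbed (up to $N^{o(1)}$) into $N^{7/4}$ and $M^{3/5}N^{13/10}$ respectively, which is precisely the exponent comparison you carry out.
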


Given a sequence $\cG=(\gamma_j)_{j=1}^J$ of real numbers 
in the interval $[0,1)$ we use
$\Delta_\cG$ to denote its \emph{discrepancy}; this quantity is defined by
$$
\Delta_\cG = \sup_{\lambda\in[0,1]}
\bigl|J_\cG(\lambda) - \lambda J \bigr|,
$$
where $J_\cG(\lambda)$ denotes the cardinality of $\cG\cap[0,\lambda]$
(in particular, $J_\cG(1) = J$). 

The celebrated \emph{Erd{\H o}s--Tur{\'a}n inequality} (see~\cite{DrTi,KuNi})
provides a means for deriving distributional properties of a sequence
from nontrivial bounds on corresponding exponential sums. 

\begin{lemma}
\label{lem:ErdTur} For any integer $H \ge 1$ the discrepancy $\Delta_\cG$ of
the real sequence $\cG=(\gamma_j)_{j=1}^J$ satisfies the bound
$$
\Delta_\cG \ll  \frac{J}{H}+\sum_{h=1}^H\frac{1}{h}\biggl|
\sum_{j=1}^J\e(h\gamma_j)\biggr|. 
$$
\end{lemma}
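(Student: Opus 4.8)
The plan is to prove the Erdős--Turán inequality by approximating the indicator function of the interval $[0,\lambda]$ from above and below by trigonometric polynomials of degree $H$, a technique going back to Beurling and Selberg. The discrepancy $\Delta_\cG$ measures how far the counting function $J_\cG(\lambda)=\#\{j:\gamma_j\in[0,\lambda]\}$ deviates from its ideal value $\lambda J$, uniformly in $\lambda$. Since $J_\cG(\lambda)=\sum_{j=1}^J \mathbf 1_{[0,\lambda]}(\gamma_j)$ (working on the circle $\R/\Z$, so that $\gamma_j\in[0,1)$), the key idea is to replace the characteristic function $\mathbf 1_{[0,\lambda]}$ by smooth majorants and minorants whose Fourier coefficients vanish beyond frequency $H$. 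The oscillatory sum $\sum_j \e(h\gamma_j)$ then appears naturally when one expands these trigonometric approximations into their Fourier series and sums over $j$.

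First I would record the existence of the Selberg (or Vinogradov) trigonometric polynomials: for any $H\ge 1$ and any arc $I=[0,\lambda]\subseteq[0,1)$ there exist real trigonometric polynomials $S_H^{\pm}$ of degree at most $H$ with $S_H^-(t)\le \mathbf 1_I(t)\le S_H^+(t)$ for all $t$, whose zeroth Fourier coefficient satisfies $\widehat{S_H^{\pm}}(0)=\lambda\pm O(1/(H+1))$, and whose higher Fourier coefficients obey $|\widehat{S_H^{\pm}}(h)|\le \tfrac1{H+1}+\min\bigl(\lambda,\tfrac1{\pi|h|}\bigr)\ll \tfrac1{H+1}+\tfrac1{|h|}$ for $1\le |h|\le H$. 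This is the standard one-sided approximation of the indicator of an interval, and I would cite it from \cite{KuNi} or \cite{DrTi} rather than reconstruct it. The second step is to apply the majorant: since $\mathbf 1_I(\gamma_j)\le S_H^+(\gamma_j)$, summing over $j$ gives
$$
J_\cG(\lambda)\le \sum_{j=1}^J S_H^+(\gamma_j)
= J\,\widehat{S_H^+}(0) + \sum_{0<|h|\le H}\widehat{S_H^+}(h)\sum_{j=1}^J\e(h\gamma_j).
$$
Inserting the coefficient bounds yields
$$
J_\cG(\lambda)-\lambda J \ll \frac{J}{H}
+\sum_{0<|h|\le H}\Bigl(\frac1{H}+\frac1{|h|}\Bigr)\Bigl|\sum_{j=1}^J\e(h\gamma_j)\Bigr|,
$$
and since the exponential sums at $h$ and $-h$ are complex conjugates of equal modulus, the sum over $|h|$ collapses to twice the sum over $1\le h\le H$; the $1/H$ contributions combined over all $h$ also fold into the $J/H$ term. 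The minorant $S_H^-$ gives the matching lower bound $J_\cG(\lambda)-\lambda J\gg -\bigl(\tfrac{J}{H}+\sum_{h=1}^H\tfrac1h|\sum_j\e(h\gamma_j)|\bigr)$ by the identical argument. Taking the supremum over $\lambda\in[0,1]$ and noting the bounds are uniform in $\lambda$ produces the asserted inequality.

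The main obstacle, and the only genuinely nontrivial input, is the construction and the sharp Fourier-coefficient estimates for the extremal functions $S_H^{\pm}$: one needs both the two-sided pointwise domination and the decay $|\widehat{S_H^{\pm}}(h)|\ll 1/|h|$ away from $h=0$, together with the $O(1/H)$ control of the mean value. Since the paper is content to cite the Erdős--Turán inequality from \cite{DrTi,KuNi}, I would invoke the Selberg polynomials as a black box with their standard coefficient bounds rather than deriving them; everything after that is bookkeeping. A minor point to handle carefully is the normalization of the frequency range and the factor of $2$ from pairing $h$ with $-h$, which is absorbed into the implied constant, consistent with the statement's use of $\ll$.
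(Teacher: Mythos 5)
Your argument is sound, but it is worth saying up front that the paper contains no proof of this lemma at all: Lemma~\ref{lem:ErdTur} is the classical Erd\H{o}s--Tur\'an inequality, which the authors simply state and use as a black box with a citation to \cite{DrTi,KuNi}. Your Beurling--Selberg majorant/minorant argument is one of the standard proofs of that cited result, and your skeleton --- one-sided trigonometric approximations of degree at most $H$ with $\widehat{S_H^\pm}(0)=\lambda\pm\tfrac{1}{H+1}$ and $|\widehat{S_H^\pm}(h)|\ll \tfrac{1}{H+1}+\tfrac{1}{|h|}$, Fourier expansion over $|h|\le H$, pairing $h$ with $-h$ via conjugate symmetry, then taking the supremum over $\lambda$ --- is correct and does yield the stated bound uniformly in $\lambda$; invoking the Selberg polynomials as a cited black box is a reasonable division of labor, since that construction is the only nontrivial input. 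One bookkeeping slip should be fixed: the contributions $\tfrac{1}{H+1}\bigl|\sum_{j=1}^J\e(h\gamma_j)\bigr|$ summed over $0<|h|\le H$ do \emph{not} fold into the $J/H$ term --- bounding those exponential sums trivially by $J$ would give a total of order $J$, not $J/H$. Instead they are absorbed into the exponential-sum term, since $\tfrac{1}{H+1}\le\tfrac{1}{|h|}$ for $1\le|h|\le H$, so that $\tfrac{1}{H+1}+\tfrac{1}{|h|}\le\tfrac{2}{|h|}$; only the zeroth-coefficient error $J\cdot O\bigl(\tfrac{1}{H+1}\bigr)$ produces the $J/H$ term. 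With that correction the proof is complete, and it gives exactly what the paper imports from the literature.
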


Let $v_p(\cdot)$ be the standard $p$-adic valuation; that is,
for any integer $n\ge 1$, $v_p(n)$ is the largest integer $e$
for which $p^e\mid n$.
To study the distribution of the least denominator $q(n)$
of the Dedekind sums to modulus~$n$, 
we make use of the following explicit formula of Girstmair~\cite[Corollary~1]{Girst1}.

\begin{lemma}
\label{lem:qn}
For  any  positive integer $n$ we have
$$
q(n) = \left\{\begin{array}{ll}
q_0(n) &\quad\text{if $n$ is odd,}\\
2^{v_2(n)-1} q_0(n)&\quad\text{if $n$ is even,}
\end{array}\right.
$$
where 
$$
q_0(n) = \prod_{\substack{p\,\mid\,n\\p \equiv 3 \bmod 4}} p^{v_p(n)}.
$$
\end{lemma}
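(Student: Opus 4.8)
The plan is to derive the formula from Hickerson's congruence (Lemma~\ref{lem:FracPart}) by reducing it to a purely local, prime-by-prime computation. First I would observe that for coprime $m,n$ the rationals $S(m,n)=12\,s(m,n)$ and $(m+m_n^*)/n$ differ by an integer, by Lemma~\ref{lem:FracPart}; since two rationals differing by an integer have the same reduced denominator, the relevant denominator $q(m,n)$ equals the reduced denominator of $(m+m_n^*)/n$, namely $n/\gcd(m+m_n^*,n)$. Consequently
$$
q(n)=\min_{m\in(\Z/n\Z)^*}\frac{n}{\gcd(m+m_n^*,n)}=\frac{n}{g(n)},\qquad g(n):=\max_{m\in(\Z/n\Z)^*}\gcd(m+m_n^*,n),
$$
so the whole statement reduces to evaluating $g(n)$.

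Next I would localise. Write $a_p=v_p(n)$. Since $m\,m_n^*\equiv1\pmod{p^{a_p}}$ for each prime $p\mid n$, for $1\le j\le a_p$ we have $p^{j}\mid(m+m_n^*)$ iff $m\equiv-m_n^*\pmod{p^{j}}$, and multiplying this congruence by $m$ gives the equivalent condition $m^2\equiv-1\pmod{p^{j}}$. Hence $\min(a_p,v_p(m+m_n^*))$ is the largest $j\le a_p$ for which $m^2\equiv-1\pmod{p^{j}}$ holds. By the Chinese Remainder Theorem a unit $m\bmod n$ corresponds to an independent choice of a unit modulo each $p^{a_p}$, so $g(n)=\prod_{p\mid n}p^{e_p}$, where $e_p$ is the largest $j\le a_p$ such that $m^2\equiv-1\pmod{p^{j}}$ is solvable.

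The heart of the argument is then the elementary local analysis of $x^2\equiv-1$. For an odd prime $p\equiv1\bmod4$ the residue $-1$ is a quadratic residue mod $p$, and Hensel's lemma lifts a root to every power $p^{j}$; hence $m^2\equiv-1\pmod{p^{a_p}}$ is solvable and $e_p=a_p=v_p(n)$. For $p\equiv3\bmod4$ the residue $-1$ is a non-residue already mod $p$, so no $j\ge1$ works and $e_p=0$. For $p=2$ the key point is that every odd $m$ satisfies $m^2\equiv1\bmod 8$, so $v_2(m^2+1)=1$ exactly; thus $e_2=\min(v_2(n),1)$, which equals $1$ precisely when $n$ is even. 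Assembling these contributions gives $g(n)=2^{\min(v_2(n),1)}\prod_{p\equiv1(4)}p^{v_p(n)}$, and dividing $n$ by $g(n)$ collapses to $2^{\max(0,\,v_2(n)-1)}\,q_0(n)$, which is exactly the claimed formula in the odd case ($v_2(n)=0$) and the even case ($v_2(n)\ge1$).

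The main obstacle is not any single deep step but handling two subtleties correctly. First, in the $p=2$ analysis one must verify that $v_2(m^2+1)$ can never exceed $1$, so that the lone factor $2^{v_2(n)-1}$ is forced and cannot be removed by a cleverer choice of $m$. Second, one must justify via the Chinese Remainder Theorem that the locally optimal residues at the distinct primes can be realised simultaneously by a single global unit $m\in(\Z/n\Z)^*$; this is where the Hensel lifting for $p\equiv1\bmod4$ (producing solutions of arbitrarily high $p$-adic order) is the only place a genuine lemma, rather than a direct computation, is invoked.
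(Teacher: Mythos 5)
The paper does not actually prove Lemma~\ref{lem:qn}: it imports the formula from Girstmair~\cite[Corollary~1]{Girst1} as a black box. Your derivation is therefore a self-contained alternative, and its mathematical core is sound. The chain of reductions — Hickerson's congruence (Lemma~\ref{lem:FracPart}) turns the least-denominator problem into maximizing $\gcd(m+m_n^*,n)$ over units $m$; the Chinese Remainder Theorem splits this into independent local problems at each prime power $p^{v_p(n)}\,\|\,n$; and the local problem is the solvability of $x^2\equiv-1\pmod{p^j}$, which you settle correctly (Hensel lifting when $p\equiv1\bmod4$, quadratic nonresidue when $p\equiv3\bmod4$, and $v_2(m^2+1)=1$ for all odd $m$) — assembles to exactly the stated formula, and your two flagged subtleties (the $2$-adic ceiling and the CRT realizability of the locally optimal residues) are indeed the points that need care and are handled correctly.

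There is, however, one issue you should make explicit rather than elide. Your opening step asserts that $q(m,n)$ equals the reduced denominator of $(m+m_n^*)/n$, i.e.\ of $S(m,n)=12\,s(m,n)$. But the paper defines $q(m,n)$ as the reduced denominator of $s(m,n)$ itself, and under that literal definition both your identification and the lemma are false: for $n=3$ one has $s(1,3)=1/18$ and $s(2,3)=-1/18$, so the least denominator is $18$, whereas the formula gives $q_0(3)=3$. In general the denominators of $s(m,n)$ and of $12\,s(m,n)$ differ by the factor $\gcd\bigl(12,q(m,n)\bigr)$, which is not constant, so the two normalizations cannot be interchanged silently. The formula of Lemma~\ref{lem:qn} (and its use in the proof of Theorem~\ref{thm:Denom}) is correct precisely for the least denominator of $S(m,n)$ — the normalization your argument works with throughout — so you have proved the statement that is actually true and actually needed; the mismatch is an inconsistency in the paper's own definition, not a flaw in your local analysis. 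Still, as written, your first sentence inherits that inconsistency: begin the proof by stating that $q(m,n)$ is to be read as the reduced denominator of $S(m,n)=12\,s(m,n)$, and the argument is then complete and correct.
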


To prove Theorem~\ref{thm:Denom} we apply Lemma~\ref{lem:qn}
in conjunction with the following classical theorem of Wirsing~\cite{Wirs}.

\begin{lemma}
\label{lem:Wirs} Assume that a real-valued multiplicative function
$f(n)$ satisfies
the following conditions:
\begin{itemize}
\item[$(i)$] $f(n) \ge 0$ for all $n\in\N$;
\item[$(ii)$] for some constants $a,b>0$ with $b<2$ the inequality 
$f(p^\alpha)\le ab^\alpha$ holds for all primes $p$ and integers $\alpha\ge 2$;
\item[$(iii)$] there exists a constant $\nu > 0$ such that
$$
\sum_{p \le N} f(p)  = \left(\nu + o(1) \right) \frac{N}{\log N}
\qquad(N\to\infty).
$$
\end{itemize}
Then
$$
\sum_{n \le N} f(n) = \biggl(\frac{1} {e^{\gamma\nu}\Gamma(\nu)} +o(1)\biggr)
\frac{N}{\log N}\prod_{p\le N}\,  \sum_{\alpha=0}^\infty \frac{f(p^\alpha)}{p^\alpha}
\qquad(N\to\infty),
$$
where $\gamma$ is the Euler-Mascheroni constant, and
$\Gamma(\cdot)$ is the gamma function of Euler.
\end{lemma}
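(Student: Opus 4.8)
The plan is to study the summatory function $M(x)=\sum_{n\le x}f(n)$ directly, by means of a convolution identity together with a renewal-type integral equation; this is the essence of Wirsing's original, essentially real-variable, argument (alternatively one could run the Selberg--Delange method, but the weakness of hypothesis $(iii)$ makes the Tauberian passage the real content either way). The starting point is the generalized von Mangoldt function $\Lambda_f$ attached to $f$, defined through the Dirichlet series $F(s)=\sum_{n\ge1}f(n)n^{-s}$ by the relation $-F'(s)/F(s)=\sum_{n\ge1}\Lambda_f(n)n^{-s}$. Since $f$ is multiplicative, $\Lambda_f$ is supported on prime powers, with $\Lambda_f(p)=f(p)\log p$ and the higher values $\Lambda_f(p^k)$ ($k\ge2$) expressible in terms of the $f(p^j)$. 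The resulting convolution identity $f(n)\log n=\sum_{d\mid n}\Lambda_f(d)f(n/d)$ gives, after summation over $n\le x$,
$$
\sum_{n\le x}f(n)\log n=\sum_{d\le x}\Lambda_f(d)\,M(x/d).
$$

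First I would isolate the main contribution on the right, the terms $d=p$ with $p$ prime, namely $\sum_{p\le x}f(p)\log p\cdot M(x/p)$, and show that the prime-power terms $p^k$ with $k\ge2$ are of lower order. This is exactly where hypothesis $(ii)$ enters: the bound $f(p^\alpha)\le ab^\alpha$ with $b<2$ forces $\sum_{k\ge2}\sum_p|\Lambda_f(p^k)|\,p^{-k}<\infty$ (the relevant series converge to the right of $\Re s=\tfrac12$), so these terms feed only into the convergent part of the Euler product and not into the leading growth. Next I would convert hypothesis $(iii)$ into the shape needed here: partial summation turns $\sum_{p\le y}f(p)=(\nu+o(1))y/\log y$ into $\sum_{p\le y}f(p)\log p=(\nu+o(1))y$, so that writing $\psi_f(y)=\sum_{p\le y}f(p)\log p$ the main term becomes a Stieltjes integral $\int_{1^-}^{x}M(x/y)\,d\psi_f(y)$, which is well approximated by $\nu\int_1^x M(x/y)\,dy$.

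Combining this with the trivial decomposition $M(x)\log x=\sum_{n\le x}f(n)\log n+\sum_{n\le x}f(n)\log(x/n)$ produces an approximate integral equation for $M$. After the substitution $x=e^u$ and a suitable normalization this becomes a linear renewal equation whose solution grows like $x(\log x)^{\nu-1}$; the precise analysis of the equation (through its Mellin/Laplace transform, whose singularity at the relevant point has order $\nu$) is what produces the factor $1/\Gamma(\nu)$. I expect this Tauberian step --- solving the renewal equation rigorously and extracting the exact constant $1/\Gamma(\nu)$ while controlling all error terms uniformly in $x$, using only the averaged hypothesis $(iii)$ rather than any genuine analytic continuation --- to be the main obstacle, and the point at which the full strength of Wirsing's method, rather than any soft argument, is unavoidable.

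Finally, to match the clean form stated in the lemma, I would compare the asymptotic $M(x)\sim c\,x(\log x)^{\nu-1}$ with the truncated Euler product $\prod_{p\le N}\sum_{\alpha\ge0}f(p^\alpha)p^{-\alpha}$. Writing each local factor as $(1-1/p)^{-\nu}$ times a factor whose product over all $p$ converges, and invoking Mertens' theorem $\prod_{p\le N}(1-1/p)^{-1}\sim e^{\gamma}\log N$, one finds that the truncated product is asymptotic to $e^{\gamma\nu}(\log N)^{\nu}$ times that convergent constant. This is precisely the origin of the $e^{\gamma\nu}$ in the denominator: the factor $1/(e^{\gamma\nu}\Gamma(\nu))$ is exactly what makes $\tfrac{N}{\log N}\prod_{p\le N}\sum_\alpha f(p^\alpha)p^{-\alpha}$ reproduce $c\,N(\log N)^{\nu-1}$, which completes the proof.
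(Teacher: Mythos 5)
First, a point of comparison that matters for this review: the paper contains no proof of this lemma at all --- it is Wirsing's classical theorem (from the cited 1961 paper \cite{Wirs}) and is invoked as a black box in the proof of Theorem~\ref{thm:Denom}. So your sketch must stand on its own, and it does not. The renewal-equation/Tauberian step --- solving the approximate integral equation for $M(x)=\sum_{n\le x}f(n)$ and extracting the constant $1/\Gamma(\nu)$ using only the averaged hypothesis $(iii)$ --- is not one step among several: it is the entire content of Wirsing's theorem, and you explicitly defer it (``the main obstacle''). Everything you do carry out (the convolution identity, the partial summation converting $(iii)$ into $\sum_{p\le y}f(p)\log p=(\nu+o(1))y$, the normalization against the Euler product) is the routine periphery. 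As written, the proposal is a road map to Wirsing's argument, not a proof of the lemma.

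Beyond that acknowledged gap, two of your concrete claims fail under the stated hypotheses. (a) The assertion that $(ii)$ forces $\sum_{k\ge2}\sum_p|\Lambda_f(p^k)|\,p^{-k}<\infty$ is false: $\Lambda_f(p^k)$ involves powers of $f(p)$, e.g.\ $\Lambda_f(p^2)=\bigl(2f(p^2)-f(p)^2\bigr)\log p$, and $(iii)$ controls $f(p)$ only on average. One can satisfy $(i)$--$(iii)$ while putting $f(p_j)\approx\epsilon_j 2^j/j$ on a single prime $p_j\in(2^j,2^{j+1}]$ with $\epsilon_j\to0$ slowly; then $\sum_p f(p)^2(\log p)p^{-2}\gg\sum_j\epsilon_j^2/j$ diverges. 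This is exactly why Wirsing avoids $\Lambda_f$ and works instead with the identity $\sum_{n\le x}f(n)\log n=\sum_{p^k\le x}f(p^k)\log(p^k)\sum_{m\le x/p^k,\ p\nmid m}f(m)$, in which only the non-negative values $f(p^k)$ appear and $(ii)$ with $b<2$ controls the terms $k\ge2$ directly. (b) Your endgame presupposes both that $M(x)\sim c\,x(\log x)^{\nu-1}$ for an honest constant $c$, and that each local factor is $(1-1/p)^{-\nu}$ times a factor whose product over all $p$ converges, i.e.\ that $\sum_p(f(p)-\nu)/p$ converges. Hypothesis $(iii)$ gives neither: in the spiked example above, $\prod_{p\le N}\sum_{\alpha}f(p^\alpha)p^{-\alpha}$ is $(\log N)^{\nu+o(1)}$ but is not asymptotic to any constant multiple of $(\log N)^{\nu}$, and correspondingly no asymptotic $M(x)\sim c\,x(\log x)^{\nu-1}$ can hold. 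This is precisely why the theorem is stated with the truncated Euler product as the comparison function: the proof must establish the coupled asymptotic between $\sum_{n\le N}f(n)$ and $\frac{N}{\log N}\prod_{p\le N}\sum_\alpha f(p^\alpha)p^{-\alpha}$ directly, rather than first proving a clean power-of-$\log$ asymptotic and then renormalizing, which is what your plan does.
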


\section{Double sums with Kloosterman fractions}

In what follows, we use the notation
$$
\e_k(t)=\exp(2\pi it/k)\qquad(k\in\N,~t\in\R).
$$

In the next lemma, we establish a variant of~\cite[Theorem~2]{DuFrIw}; 
however, we use Corollary~\ref{cor:DFIa} to get a quantitatively 
stronger result. 

\begin{lemma}
\label{lem:DFIb}
Given arbitrary integers $a$ and $b\ne 0$,  
positive integers $M\le N$, two sequences
of integers $\cK=(K_n)$ and $\cL=(L_n)$ satisfying~\eqref{eq:SeqKM},
and two sets $\fM$ and $\fN$  satisfying~\eqref{eq:SetsMN}, the sum 
$$
\sS =\sum_{\substack{(m,n)\in\fM\times\fN\\K_n<m\le K_n+L_n\\\gcd(m,n)=1}}
\en(am+bm_n^*)
$$
satisfies the uniform bound
$$
\sS\ll|\fM|^{1/2}|\fN|^{1/2}\(\frac{b}{MN} + 1\)^{1/4}  (N^{7/8} + M^{3/10}N^{13/20}) 
N^{ o(1)}+|a|JM N^{-1}
$$
provided that $|a|M\le N$, where $J$ is the number of 
pairs $(m,n)\in\fM \times\fN$ that satisfy~\eqref{eq:Pair}.
\end{lemma}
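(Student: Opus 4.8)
The plan is to reduce $\sS$ to the exponential sum $\sC(M,N;\beta,b)$ controlled by Corollary~\ref{cor:DFIa}. First I would observe that, after replacing $\sS$ by its complex conjugate if necessary (which changes the pair $(a,b)$ into $(-a,-b)$ but leaves $|\sS|$ and $|a|$ unchanged), we may assume $b\ge 1$, so that Corollary~\ref{cor:DFIa} applies and the bound is read with $|b|$ in place of $b$. The factor $\en(am)=\e(am/n)$ is the only part of the summand that couples $m$ and $n$ additively; since $m\le 2M$ and $n>N$, the hypothesis $|a|M\le N$ gives $|am/n|\ll|a|M/N\ll 1$, so I would split
$$
\en(am)=1+\bigl(\en(am)-1\bigr),\qquad \bigl|\en(am)-1\bigr|\le 2\pi\,|am/n|\ll |a|M/N.
$$
The contribution of the term $\en(am)-1$ is then at most $\sum_{\text{admissible }(m,n)}|\en(am)-1|\ll |a|MN^{-1}\cdot J$, which produces exactly the second (error) term $|a|JMN^{-1}$.

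It remains to bound the main sum $\sS_0$ obtained by replacing $\en(am)$ with $1$, namely
$$
\sS_0=\sum_{n\in\fN}\ \sum_{\substack{m\in\fM,\ \gcd(m,n)=1\\ K_n<m\le K_n+L_n}}\en(b\,m_n^*).
$$
Here the only remaining coupling between $m$ and $n$ is the interval condition $K_n<m\le K_n+L_n$, whose endpoints depend on $n$. I would remove it by completion: choosing an integer $P\asymp M$ with $P>2M$, and using~\eqref{eq:SeqKM} together with $\fM\subseteq(M,2M]$ to rule out wrap-around, one expands the indicator of the interval as a finite discrete Fourier series $\mathbf 1[K_n<m\le K_n+L_n]=\sum_{|h|<P/2}c_h(n)\,\e(hm/P)$, valid for every integer $m\in(M,2M]$, with coefficients obeying the standard bound $|c_h(n)|\ll 1/\max(1,|h|)$. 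Inserting this expansion and interchanging summation decouples the variables, since $\e(hm/P)$ is a pure additive character in $m$ whose frequency is independent of $n$, while the coefficients $c_h(n)$ depend only on $n$.

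For each fixed $h$ I would then apply the Cauchy--Schwarz inequality in the variable $m$, with $\alpha_m=\e(hm/P)$ on $\fM$ and $\beta^{(h)}_n=c_h(n)$ on $\fN$; after enlarging the outer summation over $m$ to all $m\sim M$ (which only increases the right-hand side) this bounds the $h$-th piece by $|\fM|^{1/2}\,\sC(M,N;\beta^{(h)},b)^{1/2}$. Corollary~\ref{cor:DFIa} (applicable since $M\le N$) together with the elementary inequality $\sqrt{x+y}\le\sqrt x+\sqrt y$ gives
$$
\sC(M,N;\beta^{(h)},b)^{1/2}\ll\|\beta^{(h)}\|_2\(\frac{b}{MN}+1\)^{1/4}\bigl(N^{7/8}+M^{3/10}N^{13/20}\bigr)N^{o(1)},
$$
and the coefficient bound yields $\|\beta^{(h)}\|_2\ll|\fN|^{1/2}/\max(1,|h|)$. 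Summing over $|h|<P/2$ and using $\sum_{|h|<P/2}1/\max(1,|h|)\ll\log P=N^{o(1)}$ then produces the main term $|\fM|^{1/2}|\fN|^{1/2}\bigl(\tfrac{b}{MN}+1\bigr)^{1/4}(N^{7/8}+M^{3/10}N^{13/20})N^{o(1)}$, which combined with the estimate for the error part completes the proof. I expect the main obstacle to be the bookkeeping in the completion step --- verifying that the discrete Fourier expansion of the $n$-dependent interval is exact on the full range of $m$ (no wrap-around) and that $|c_h(n)|\ll 1/\max(1,|h|)$ holds uniformly in $n$ --- since everything afterward is a direct application of Cauchy--Schwarz and Corollary~\ref{cor:DFIa}.
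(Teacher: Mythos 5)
Your proposal is correct and follows essentially the same route as the paper: strip off the $\en(am)$ factor using $|a|M\le N$ to produce the error term $|a|JMN^{-1}$, complete the $n$-dependent interval condition on $m$ by a finite Fourier expansion (exact because \eqref{eq:SeqKM} and $\fM\subseteq(M,2M]$ prevent wrap-around), then apply Cauchy--Schwarz in $m$ and Corollary~\ref{cor:DFIa} to each frequency and sum the resulting harmonic series into the $N^{o(1)}$ factor. The only difference is cosmetic: the paper uses period exactly $M$ and puts the geometric sum into $\beta_n^{(c)}$ with the bound $\min\{L_n,M/|c|\}$, whereas you normalize the Fourier coefficients directly, which amounts to the same estimate.
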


\begin{proof} Since $|a|M\le N$ by hypothesis, the estimate $\en(am)=1+O(|a| M/N)$
holds uniformly for all terms in the sum under consideration, hence
\begin{equation}
\label{eq:S0}
\cS =  \cS_0 + O\( |a| JM N^{-1}\),
\end{equation}
where 
$$
\cS_0=\sum_{\substack{(m,n)\in\fM\times\fN\\K_n<m\le K_n+L_n\\\gcd(m,n)=1}}
\en(bm_n^*). 
$$

We can assume $b\ge 1$.
Since $L_n\le M$ for every $n\sim N$ by~\eqref{eq:SeqKM},
taking $M_0=(M-1)/2$ we have the following relation
for all $m\sim M$:
$$
M^{-1}\sum_{-M_0<c\le M_0}\sum_{K_n<k\le K_n+L_n}
\e_M(c(k-m))=\begin{cases}
1&\quad\hbox{if $K_n<m\le K_n+L_n$},\\
0&\quad\hbox{otherwise}.
\end{cases}
$$
It follows that
\begin{equation}
\label{eq:insect}
\sS_0=M^{-1}\sum_{-M_0<c\le M_0}\sS_0(c),
\end{equation}
where
$$
\sS_0(c)=\sum_{\substack{(m,n)\in\fM\times\fN \\\gcd(m,n)=1}}
\alpha_m^{(c)}\beta_n^{(c)}\,\en(bm_n^*)
$$
with
$$
\alpha_m^{(c)}=\e_M(-cm)\qquad(m\in\fM)
$$
and 
$$
\beta_n^{(c)}=\sum_{K_n<k\le K_n+L_n}\e_M(ck)\qquad(n\in\fN).
$$
We also put 
$$
\beta_n^{(c)}=0\qquad(n\sim N,~n\not\in\fN).
$$
Using Cauchy's inequality we see that
$$
\bigl|\sS(c)\bigr|^2
\le |\fM| \sum_{m\sim M}\biggl|\sum_{\substack{n\sim N\\\gcd(m,n)=1}}
\beta_n^{(c)}\,\en(bm_n^*)\biggr|^2 
=|\fM|\,\sC(M,N;\beta^{(c)},b),
$$
where $\sC(M,N;\beta^{(c)},b)$ is defined as in Lemma~\ref{lem:DFIa Gen}.
Applying the bound of Corollary~\ref{cor:DFIa} it follows that
$$
\sS_0(c)\ll |\fM|^{1/2} \|\beta^{(c)}\|_2 
\(\frac{b}{MN} + 1\)^{1/4} (N^{7/8} + M^{3/10}N^{13/20}) 
N^{ o(1)},
$$
hence by~\eqref{eq:insect} we have
$$
\sS_0\ll |\fM|^{1/2}\(\frac{b}{MN} + 1\)^{1/4} M^{-1} (N^{7/8} + M^{3/10}N^{13/20})
N^{ o(1)}
\sum_{-M_0<c\le M_0} \|\beta^{(c)}\|_2.
$$

We now recall the well-known bound
\begin{equation}
\label{eq:beta}
  \beta_n^{(c)} \ll \min\left \{L_n,\frac{M}{|c|}\right \}  , 
\end{equation}
which holds for any integer $c$, with $0 < |c| \le M_0$;
see~\cite[Bound~(8.6)]{IwKow}.  Since $L_n\le M$, from
\eqref{eq:beta} we immediately derive that 
$$
\|\beta^{(c)}\|_2 \ll\frac{M|\fN|^{1/2}}{|c|+1}\qquad(-M_0<c\le M_0),
$$
and thus
$$
\sS_0\ll |\fM|^{1/2}|\fN|^{1/2} \(\frac{b}{MN} + 1\)^{1/4}   (N^{7/8} + M^{3/10}N^{13/20}) 
N^{ o(1)}.
$$
Using this relation in~\eqref{eq:S0} we complete the proof.
\end{proof}

\section{Proof of Theorem~\ref{thm:Discr}}
\label{sec:thm1}

Let $\cJ$ be the set of pairs $(m,n)$ in $\fM\times\fN$
that satisfy~\eqref{eq:Pair}. Put $J=|\cJ|=\sN_\fD$.  Applying Lemma~\ref{lem:ErdTur} to the
sequence $\cG$ of fractional parts $\{S(m,n)\}$ with $(m,n)\in\cJ$,
we see that the bound
\begin{equation}
\label{eq:DKL+ET}
\begin{split}
\sD_\fD&\ll\frac{J}{H}+\sum_{h=1}^H\frac{1}{h}
\bigg|\sum_{(m,n)\in\cJ}\e(h\,S(m,n))\bigg|\\
&=\frac{J}{H}+\sum_{h=1}^H\frac{1}{h}
\bigg|\sum_{(m,n)\in\cJ}\e_n(hm+hm_n^*)\bigg|
\end{split}
\end{equation}
holds uniformly for any integer $H\in[1,N]$,
where we have used Lemma~\ref{lem:FracPart} in the second step.
Next, we apply Lemma~\ref{lem:DFIb} with $a=b=h$
to bound each inner sum on the right side of~\eqref{eq:DKL+ET},
and we sum over $h$; using 
 the notation $K=|\fM\times\fN|= |\fM||\fN|$ we see that 
\begin{align*}
\sD_\fD
\ll JH^{-1}+K^{1/2}(N^{7/8} + M^{3/10}N^{13/20})  N^{o(1)}\sum_{h=1}^H\frac{1}{h}&
 \(\frac{b}{MN} + 1\)^{1/4} \\
&\qquad  +HJM N^{-1} . 
\end{align*}
Clearly, we can assume that $H \le MN$ as otherwise the last term 
$HJM N^{-1}$ exceeds the trivial bound $\sD_\fD \le J$. Under this condition
the above bound simplifies as
$$
\sD_\fD\ll JH^{-1}  + 
 K^{1/2}  (N^{7/8} + M^{3/10}N^{13/20}) N^{o(1)} 
+HJM N^{-1} . 
$$

We now choose $H=\fl{(N/M)^{1/2}}$ and note that since $N \ge M$ we have 
$H \asymp  (N/M)^{1/2}$.
Hence, with this choice we derive that
\begin{equation}
\label{eq:D Prelim}
\sD_\fD\ll JM^{1/2} N^{-1/2}  + 
 K^{1/2}  (N^{7/8} + M^{3/10}N^{13/20})  N^{o(1)} . 
\end{equation}
Since $K \le MN$ we see that the bound~\eqref{eq:D Prelim}
can be  nontrivial only in the case that
$N^{7/8} \le K^{1/2} \le (MN)^{1/2}$ and thus only for $M \ge N^{3/4}$.
However, in this case we have $N^{7/8} \le M^{3/10}N^{13/20}$, and~\eqref{eq:D Prelim}
simplifies to
$$
\sD_\fD\ll JM^{1/2} N^{-1/2}  + 
 K^{1/2}  M^{3/10}N^{13/20+o(1)}.
$$
This concludes the proof. 

\section{Proof of Theorem~\ref{thm:Denom}} 
\label{sec:thm2}

Let $f(n)=q(n)/n$ for all $n\in\N$.  Note that $f(1)=1$, and
$$
f(n) = \delta_n
\prod_{\substack{p\,\mid\,n\\p\equiv 1\bmod 4}}p^{-v_p(n)}\qquad(n>1),
$$
where
$$
\delta_n=\begin{cases}
1&\quad\hbox{if $n$ is odd},\\
1/2&\quad\hbox{if $n$ is even}.
\end{cases}
$$
It is easy to see that $f(n)$ is a multiplicative function satisfying
the conditions of Lemma~\ref{lem:Wirs} with $\nu=1/2$. Since $\Gamma(1/2)=\sqrt{\pi}$
we have
\begin{align*}
\sum_{n \le N} \frac{q(n)}{n}=\sum_{n\le N}f(n)
& = \left(\frac{1} {e^{\gamma/2}\sqrt{\pi}} +o(1)\right)
\frac{N}{\log N}\prod_{p\le N}\,  \sum_{\alpha=0}^\infty \frac{f(p^\alpha)}{p^\alpha}\\
& = \left(\frac{1} {e^{\gamma/2}\sqrt{\pi}} +o(1)\right)
\frac{N}{\log N}\, Q_2(N)\,Q_{4,1}(N)\,Q_{4,3}(N),
\end{align*}
where
\begin{align*}
Q_2(N)& =   \frac{1}{4} + \sum_{\alpha=1}^\infty \frac{1}{2^{\alpha+1}} = \frac{3}{4},\\
 Q_{4,1}(N)   & =  \prod_{\substack{p\le N \\ p \equiv 1 \bmod 4}}\,  
\sum_{\alpha=0}^\infty \frac{1}{p^{2\alpha}}=
 \prod_{\substack{p\le N \\ p \equiv 1 \bmod 4}} \frac{1}{1-p^{-2}},\\
Q_{4,3}(N) & =  \prod_{\substack{p\le N \\ p \equiv 3 \bmod 4}}\,  
\sum_{\alpha=0}^\infty \frac{1}{p^{\alpha}}=
\prod_{\substack{p\le N \\ p \equiv 3 \bmod 4}} \frac{1}{1-p^{-1}}.
\end{align*}
Clearly, the product for $Q_{4,1}(N)$ converges to 
$$
\varpi_{4,1}    =  \prod_{p \equiv 1 \bmod 4}\frac{1}{1-p^{-2}}.
$$ 
Furthermore, by a result of Uchiyama~\cite{Uchi} we have
$$
Q_{4,3}(N) = (\varpi_{4,3} + o(1))(\log N)^{1/2},
$$
where
$$
\varpi_{4,3} = \biggl(\frac{2e^\gamma}{\pi}
\prod_{p \equiv 3\bmod 4}\frac{1}{1-p^{-2}}\biggr)^{1/2}. 
$$
Collecting the above results we deduce that
$$
\sum_{n \le N} \frac{q(n)}{n}=
\biggl(\frac{3\,\varpi_{4,1}\varpi_{4,3}}{4\,e^{\gamma/2}\sqrt{\pi}} +o(1)\biggr)
\frac{N}{(\log N)^{1/2}}.
$$
The result now follows by partial summation.

\section{Comments}
\label{sec:Comm}

We note that in the case that $m$ runs through  a sufficiently 
long interval of consecutive integers, that is, for pairs
$(m,n)$ with $m$ running through \emph{all} integers satisfying
\eqref{eq:Pair}, the standard application 
of the Weil bound (see, for example, \cite[Chapter~11]{IwKow})
leads to a stronger bound than that of Theorem~\ref{thm:Discr}. 
More specifically, this approach works when
$L_n \ge n^{1/2+\varepsilon}$ for most values of $n$ under consideration (with an arbitrary fixed $\varepsilon>0$).

On the other hand, using recent bounds of very short 
Kloosterman sums due to Bourgain and Garaev~\cite{BouGar1,BouGar2}
one can obtain similar results in some cases in which $\fM$
is a fairly short interval of consecutive integers. For instance,
this approach works when $K_n = 0$ and $L_n \ge n^{\varepsilon}$
for most values of $n$.  This approach,
however, yields only logarithmic savings
over the trivial bound. 

Of course, neither of the approaches just mentioned  can be
applied in the general setting considered in this paper,
in which the sets $\fM$ and $\fN$ are essentially arbitrary. 

\section{Acknowledgments}
 
This work was initiated during a very enjoyable stay of the second author
at the Max Planck Institute for Mathematics (Bonn), and it progressed
during an equally enjoyable visit of the first author to the University of New
South Wales (Sydney); the authors thank both institutions for their
support and warm hospitality.
During the course of this research, the second author was supported in part
by ARC grant DP130100237.

\end{document}